\theoremstyle{plain}
\newtheorem{lemma}{Lemma}
\theoremstyle{definition}
\def\E{{\rm E}}
\def\midd{\,|\,}
\def\pr{{\rm pr}}
\def\dd{{\rm d}}
\def\hatt{\widehat}
\def\arr{\rightarrow}
\def\N{{\rm N}}
\def\half{\hbox{$1\over 2$}}
\def\eps{\varepsilon}
\def\Gam{{\rm Gamma}}
\def\binom{{\rm Bin}}
\def\beq{\begin{eqnarray}}
\def\eeq{\end{eqnarray}}
\def\beqn{\begin{eqnarray*}}  
\def\eeqn{\end{eqnarray*}}
\def\E{{\rm E}}
\def\dd{{\rm d}}
\def\N{{\rm N}}
\def\Pr{P}
\def\pr{{\rm pr}}
\def\quadandquad{\quad {\rm and} \quad}
\def\arr{\rightarrow}
\def\hatt{\widehat}
\def\sumin{\sum_{i=1}^n}
\def\eps{\varepsilon}
\def\half{\hbox{$1\over2$}}
\def\quart{\hbox{$1\over4$}}
\def\rootn{\sqrt{n}}
\def\midd{\,|\,}
\def\pois{{\rm Pois}}
\def\ubiq{\sqrt{2\pi}}
\titleformat{\section}{\normalfont\large\sc\centering}{\thesection}{1em}{}
\titleformat{\subsection}[runin]{\normalfont\large\bfseries}{\thesubsection}{1em}{}
\numberwithin{equation}{section} 
\renewenvironment{abstract}
               {\list{}{\rightmargin\leftmargin}%
                \item[\text{\hspace{8.5mm}\sc Abstract.}]\relax}
               {\endlist}
\begin{document}

\def\idag{October 2024}
\def\heute{\idag}
\begingroup
\begin{centering} 

\Large{\bf Probability Proofs for Stirling (and More): \\
     the Ubiquitous Role of $\mathbf {\sqrt{2\pi}} $ }\\[0.8em]
\large{\bf Nils Lid Hjort$^1$ and Emil Aas Stoltenberg$^2$ } \\[0.3em] 
\small {\sc $^1$Department of Mathematics, University of Oslo} \\[0.3em]
\small {\sc $^2$BI Norwegian Business School, Oslo } \\[0.3em]
\small {\sc {\heute}}\par
\end{centering}
\endgroup

\begin{abstract}
\noindent
The Stirling approximation formula for $n!$ dates from 1730.
Here we give new and instructive
proofs of this and related approximation formulae
via tools of probability and statistics.
There are connections to the Central Limit Theorem
and also to approximations of marginal distributions
in Bayesian setups. Certain formulae emerge
by working through particular instances, 
some independently verifiable but others perhaps not. 
A particular case yielding new formulae is that
of summing independent uniforms, related to the
Irwin--Hall distribution. Yet further proofs of the Stirling
flow from examining aspects of limiting normality
of the sample median of uniforms, and from these
again we find a proof for the Wallis product formula for $\pi$.

\noindent
{\it Key words:}
binomial, Central Limit Theorem, Gamma variables,
history, Irwin--Hall, Laplace, Poisson, Stirling, Wallis 
\end{abstract}

\section{Introduction} 
\label{section:intro}

The Stirling approximation formula is a famous one, stating that
\beq
n!\doteq n^n\exp(-n)\sqrt{2\pi\, n},
\quad {\rm in\ the\ sense\ of} \quad
{n!\over n^{n+1/2}\exp(-n)}\arr \ubiq.
\label{eq:stirling}
\eeq
Intriguingly and surprisingly, starting just by multiplying
$1\!\cdot\!2$,
$1\!\cdot2\!\cdot3$,
$1\!\cdot2\!\cdot3\!\cdot4,\ldots$, 
the formula turns out to involve the eternal mathematical
constants $e = 2.718282\ldots$ and $\pi = 3.141593\ldots$. 
Our aim here is to tie the Stirling and also related formulae
to the Central Limit Theorem (CLT), with the limiting normality
being the leading clue to both $e$ and the for statisticians
famous quantity $\ubiq$. There are also other Stirling 
connections to probability and statistics, including
approximations for marginal distributions in Bayesian setups. 

There are of course many different proofs in the literature,
going back all the way to \citet{Stirling1730} and \citet{Demoivre1730},
with ensuing scholarly articles to understand
precisely how they arrived at their findings,
what the differences were, the degree to which Stirling
deserves the name without de Moivre, and so on.
We choose to give a brief review of these historical themes
in our penultimate separate Section \ref{section:history},
with further comments concerning associated issues touched
on in our article, such as the connection from Wallis' 1656
product formula for $\pi$ to Stirling 1730,
the use of Laplace 1774 approximations for integrals, 
the Irwin--Hall distribution from 1927, among other topics
from the history of $1\!\cdot2\!\cdot3\!\cdot4,\ldots$. 
Readers mainly interested in our various CLT and
marginal distribution connections may then read on,
through the main sections, without necessarily
caring about the historical footnotes. We do point to one
such here, however, namely \citet{Pearson24},
since it directly pertains to the core questions
discussed in our article. He noted, 
``I consider that the fact that Stirling showed that
de Moivre's arithmetical constant was $\ubiq$
does not entitle him to claim the theorem''
(with various later scholars disagreeing with him
in this regard; see indeed Section \ref{section:history}). 
The issue is both of a technical nature,
``where does $\ubiq$ come from'',
and a key theme regarding sorting out who found what, how, and when.

With the benefit of some extra $295$ years of probability, we as statisticians of today might be slightly less surprised
than was de Moivre, in 1729, having received a letter
from Stirling; we're used to seeing $\ubiq$
as part of our normal workload.
There is indeed a certain literature relating
Stirling to probability themes.
\citet[Ch.~5]{Feller68} reaches the Stirling formula
in his classic book, via careful study of its logarithm,
supplemented with limiting normality calculus for the binomial
(and appears to miss that one of his exercises
gives the Stirling in a simpler fashion;
check with Section~\ref{section:history} again). 
\citet{Hu88} and \citet{Walsh95} have contributed
arguments and proofs related to approximations
from the CLT, specifically with the Poisson distribution.
Some of these ``easy proofs'' in the literature
have needed some further finishing polish to be fully accurate,
however, as commented upon in \citet{BlythPathak86}.
These authors also contribute a new proof, based on
inversion of characteristic functions. 
\citet{DiaconisFreedman86} give a neat
proof of Stirling, via Laplace approximations of
the gamma function, at the outset without a connection
to probability, though they arrived at their proof
via de Finetti theorems. 


In Section \ref{section:CLTcases} we explain the main idea
tying the CLT to Stirling, with emphasis on the
special cases of the Poisson, the gamma, the binomial distributions.
Then in Section \ref{section:uniforms} we establish
a further connection, with partly new formulae,
via the Irwin--Hall distribution, for the sum
of independent uniforms. Yet another connection,
via the density of the uniform median, is worked
with in Section \ref{section:viamedians}. This is also
seen to yield a proof for the 1656 Wallis product formula
for $\pi$. We go on to see how Stirling emerges in yet
further ways, via Laplace and approximations
to certain marginal distributions, in Section \ref{section:Laplace}.
As mentioned we then have a separate
Section \ref{section:history} with historical notes,
before we offer a few complementary remarks in
our final Section \ref{section:concludingremarks}. 


\section{The CLT connection}
\label{section:CLTcases} 


To set the stage for what shall give us new proofs and insights,
also for other related formulae,
consider in general terms i.i.d.~variables $X_i$ with mean $\xi$
and standard deviation $\sigma$,
and with partial sums $Y_n=\sumin X_i$.
With `$\arr_d$' denoting convergence in distribution, 
$Z_n=(Y_n-n\xi)/(\rootn\sigma)\arr_d Z$, a standard normal,
and 
\beq
L_n(c)=\E\,Z_n\,I(0\le Z_n\le c)
\arr L(c)=\E\,Z\,I(0\le Z\le c),
\label{eq:LntoL} 
\eeq 
with this limit equal to
\beq
L(c)=\int_0^c z\phi(z)\,\dd z
=\phi(0)-\phi(c)=(2\pi)^{-1/2}\{1-\exp(-\half c^2)\}. 
\label{eq:hereisLc} 
\eeq
In particular, with $c=\infty$, the mean of the zero-truncated
$Z_n$ tends to $1/\ubiq$. The key ingredient securing
moment convergence of $L_n(c)$ to $L(c)$, automatically
valid for each CLT application, is uniform integrability,
via $\E\,Z_n^2=1$ and $\E\,Z^2=1$;
see \citet[Ch.~2]{HjortStoltenberg25}. 

In cases where we manage to have an interesting formula
for $L_n(c)$, therefore, we have learned something.
Here we work through special cases for the CLT,
aiming for situations with clear and insightful
$L_n(c)\arr L(c)$ formulae. 

\subsection{Poisson.} 

Consider first the Poisson case, where $Z_n=(Y_n-n)/\rootn\arr_d Z$,
for $Y_n\sim\pois(n)$. With $p_n(j) =\exp(-n)n^j/j!,\,j = 0,1,2,\ldots$ the Poisson
probabilities, one finds that 
\beqn
\sum_{j\ge n}(j-n)p_n(j)
&=&np_n(n)-np_n(n)
+(n+1)p_n(n+1)-np_n(n+1) \\
& &\qquad +\, (n+2)p_n(n+2)-np_n(n+2) +  \cdots 
\eeqn 
is a telescoping series, in sum equal to $np_n(n)$. This leads to
\beqn
L_n(\infty)=\sum_{j\ge n}{j-n\over \rootn}p_n(j)
   =\rootn\exp(-n){n^n\over n!}, 
\eeqn 
which by (\ref{eq:LntoL}) tends to $1/\ubiq$, proving the Stirling. 

Also, for a finite $c$ in (\ref{eq:LntoL}), the telescoping nature
of the sum leads to
\beqn
L_n(c)=\sum_{n\le j\le n+c\rootn} {j-n\over \rootn}p_n(j)
&=&\rootn \{p_n(j)-p_n(n+c\rootn)\} \\
&=&\rootn \exp(-n){n^n\over n!}
   \Bigl\{1 - {n^{[c\rootn]}\over (n+1)\cdots([n+c\rootn])}\Bigr\} 
\eeqn 
with $[x] = \max\{z \in \mathbb{Z} \colon z \leq x\}$
the integer value of $x$. From \eqref{eq:hereisLc}, therefore,
we infer that
\beqn
\lim_{n\arr\infty} {n^{[c\rootn]}\over (n+1)\cdots([n+c\rootn])} = \exp(-\half c^2). 
\eeqn 
This may be verified independently, via logarithms and inspection of
$\sum_{1\le j\le c\rootn}\log(1+i/n)$; the present point is that we
get the formula from probability theory and the CLT. 

\subsection{Gammas.}
\label{subsection:gammas}

Let next $Y_n\sim\Gam(n,1)$, where again $Z_n=(Y_n-n)/\rootn\arr_d Z$.
Write $g_n(x)=\Gamma(n)^{-1}x^{n-1}\exp(-x)$ for the density,
with $\Gamma(z) = \int_0^{\infty} t^{z- 1}\exp(-t)\,\dd t $
the gamma function, and let
$G_n(x)=1-\exp(-x)\{1+x+\cdots+x^{n-1}/(n-1)!\}$ be the cumulative. 
One sees that $xg_n(x)=ng_{n+1}(x)$, which leads to
\beqn
L_n(c)=\int_n^{n+c\rootn} {x-n\over \rootn}g_n(x)\,\dd x
   =\rootn\int_n^{n+c\rootn} \{g_{n+1}(x)-g_n(x)\}\,\dd x. 
\eeqn 
First, for $c=\infty$, one finds from~\eqref{eq:LntoL}
and \eqref{eq:hereisLc}, i.e., the CLT plus the mean to mean argument, that 
\beqn
L_n(\infty)=\rootn \exp(-n)n^n/n!\arr \ubiq, 
\eeqn
which is precisely the Stirling formula \eqref{eq:stirling}, again. 
Working out the case for finite $c$, one finds the additional formula 
\beq
\lim_{n\arr\infty} \exp(-c\rootn)(1+c/\rootn)^n=\exp(-\half c^2).
\label{eq:newguy}
\eeq 
This may again be verified independently, but here we get it for free
from the CLT and the mean to mean argument \eqref{eq:LntoL}. 

For the gamma case we may also work directly with the density of $Z_n$,
which becomes
\beqn
h_n(z)=\rootn g_n(n+\rootn z)
&=&\rootn\,\Gamma(n)^{-1}(n+\rootn z)^{n-1}\exp(-n-\rootn z) \\
&=&{\rootn \over n!} n^n\exp(-n)(1+z/\rootn)^{n-1}\exp(-\rootn z). 
\eeqn 
Appealing to the Scheff\'e lemma, for convergence of densities, 
we see that we have proven the Stirling in one more way,
via the (\ref{eq:newguy}) formula just found. 

The chi-squared distribution is merely a scale factor away
from the gamma, so going through details for $Z_n=(\chi^2_n-n)/(2n)^{1/2}$
will essentially bring out the same: Stirling holds, via the CLT
for these distributions. 

\subsection{Binomial.} 

The symmetric binomial distribution also lends itself
nicely to working with the truncation and its mean.
With $X_1,\ldots,X_n$ i.i.d.~Bernoulli, we have $Y_n=\sumin X_i$
a binomial $(n,\half)$, and of course
$Z_n = (Y_n-n\half)/(\half\rootn)\arr_d Z$.
Write $b_n(j)={n\choose j}(\half)^n,\, j = 0,1,\ldots,n$
for the binomial $(n,\half)$ probabilities. We need to work with
$\sum_{j\ge n/2} (j-n/2)b_n(j)$. Note that
\beqn
j{n\choose j}=n{n-1\choose j-1} \quadandquad
b_n(j)=\half b_{n-1}(j-1) + \half b_{n-1}(j),
\eeqn 
which leads to 
\beqn
(j-n/2)b_n(j)
   =\half n\{b_{n-1}(j-1)-\half b_n(j)\}
   =\quart n \{b_{n-1}(j-1)-b_{n-1}(j)\}. 
   \eeqn
The telescoping nature of these terms then gives us
\beqn
\sum_{j\ge n/2} (j-n/2)b_n(j)=\quart n \{b_{n-1}(\half n-1)-(\half)^{n-1}\},
\eeqn 
taking for simplicity $n$ even. Hence, before checking the details of the $b_{n-1}(j)$, we know from the CLT and the mean to mean argument that
\beqn
L_n(\infty)=\sum_{j\ge n/2}{j-n/2\over \half\rootn} b_n(j)
   =\half \rootn \{b_{n-1}(\half n-1)-(\half)^{n-1}\} \arr 1/\ubiq. 
\eeqn
This translates to
\beqn
\half\rootn {n-1\choose n/2-1}(\half)^{n-1}\arr 1/\ubiq,
\quad \text{or}\quad
\half(n+1)^{1/2} {n\choose (n-1)/2}(\half)^{n}\arr 1/\ubiq,
\eeqn
the latter valid for $n$ odd.
But this is essentially the Stirling statement~\eqref{eq:stirling},
again; in this regard, see the details of Section \ref{section:viamedians}. 
For another relevant footnote, if one starts with
$n!\doteq a\exp(-n)n^{n+1/2}$, as de Moivre did, around 1730,
without yet knowing the constant $a$, one learns from the
CLT implied details above that one indeed must have $a=\ubiq$. 

We learn more from the general setup above,
with $L_n(c)$ of (\ref{eq:LntoL}). With $j_n=[\half n+\half c\rootn]$,
and the telescoping nature of the terms in the sum above, 
\beqn
\sum_{n/2\le j\le j_n} {j-n/2 \over \half\rootn}b_n(j)
   &=&\half\rootn \{b_{n-1}(\half n-1)-b_{n-1}(j_n)\} \\
   &=&\half\rootn b_{n-1}(\half n-1)
      \Bigl\{1-{b_{n-1}(j_n)\over b_{n-1}(\half n-1)}\Bigr\}.
\eeqn 
We are hence also learning, in the process, that 
\beqn
\lim_{n\arr\infty} {b_n([\half n+\half c\rootn])\over b_n(\half n)}
=\lim_{n\arr\infty} {n\choose [\half n+\half c\rootn]}
   \Big/{n\choose \half n} = \exp(-\half c^2). 
\eeqn 




\section{Sums of uniforms and the Irwin--Hall distribution} 
\label{section:uniforms} 
Consider the sum $Y_n=\sumin X_i$ of i.i.d.~uniforms on $(0,1)$, these
having mean $\xi=\half$ and variance $\sigma^2=1/12$.
The probability density $f_n(y)$ of $Y_n$ is a somewhat awkward one,
with different $(n-1)$-order polynomials over the intervals $[j-1,j]$
knotted together, for $j=1,\ldots,n$, 
worked out in two independent articles in the same issue of Biometrika in 1927.
What we may state, before coming to, or for that matter
independently of the details of $f_n$ and its moments, is that
with $I_n=\int_0^{n/2} yf_n(y)\,\dd y$ and
$J_n=\int_{n/2}^{n} yf_n(y)\,\dd y$, we do have 
\beq
(I_n-\half n\xi)/(\sigma\rootn)\arr-1/\ubiq,
\quad \text{and}\quad 
(J_n-\half n\xi)/(\sigma\rootn)\arr 1/\ubiq. 
\label{eq:weknowthis}
\eeq
To see how this follows from~\eqref{eq:LntoL} and \eqref{eq:hereisLc},
note that $Z_n = (Y_n - n\xi)/(\sigma \rootn) \arr_d \N(0,1)$,
and $I_n + J_n = n \xi$, so  
\beqn
0
&=& (I_n - \half n\xi) + (J_n - \half n\xi) \\
&=& \E\, (Y_n - n\xi)I\{Y_n \leq \half n\}
+ \E\, (Y_n - n\xi)I\{\half n \leq Y_n \leq n\} \\
&=& \rootn\sigma\, \E\, Z_n I\{- \half \rootn/\sigma \leq Z_n \leq 0\}
+ \rootn\sigma \,\E\, Z_n I\{0 \leq Z_n \leq \half \rootn/\sigma\}. 
\eeqn 
The only thing differing between the limits of
$(I_n - \half n\xi)/(\rootn \sigma)$ and
$(J_n - \half n\xi)/(\rootn \sigma)$, therefore, 
are their signs, but $|Z_n| \leq \sqrt{n}/(2\sigma)$,
so by~\eqref{eq:hereisLc},
\beqn
\E\, Z_n I\{0 \leq Z_n \leq \half \rootn/\sigma\} = L_n(\infty) \to 1/\ubiq. 
\eeqn 
The perhaps gently misnomed Irwin--Hall distribution
is triangular for $n=2$, then a mix of 2nd order polymomials
for $n=3$, and becomes steadily smoother and of course more and
more normal looking for $n=4,5,\ldots$, via piecewise polynomials.
Formulae for the density and cumulative of the Irwin--Hall
distribution on $[0,n]$ can be found in the literature,
e.g.~in these forms, 
\beqn
f_n(y)
={1\over (n-1)!}\sum_{j=0}^{ [y] } (-1)^j{n\choose j}(y-j)^{n-1}
\quadandquad  
F_n(y)
={1\over n!}\sum_{j=0}^{ [y] } (-1)^j {n\choose j}(y-j)^n. 
\eeqn
Proofs given are via somewhat cumbersome induction,
or inversion of generating functions, 
but see \citet*{Marengo17} for a different geometric type of
argument. It is nice now to have an occasion and a reason
for finding a new formula for a key quantity,
namely the truncated moment, for a more than hundred years
old distribution. 

\begin{lemma}
\label{lemma:hereisIn}
{{\rm   
For the Irwin--Hall density $f_n$, with $n$ even, we have 
\beqn
I_n
=\int_0^{n/2} yf_n(y)\,\dd y
=\quart n-{n^{n+1}\over (n+1)!}(\half)^{n+1}
   \sum_{j=0}^{n/2} (-1)^j{n\choose j}(1-2j/n)^{n+1}.
\eeqn }} 
\end{lemma}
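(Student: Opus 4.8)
The plan is to avoid working directly with the piecewise polynomial density and instead integrate by parts, trading $f_n$ for the cumulative $F_n$ whose closed form is already recorded above. Writing $I_n=\int_0^{n/2} y f_n(y)\,\dd y$ and using $f_n=F_n'$ (legitimate since $F_n$ is continuous across the integer knots, so differentiating the recorded formula on each interval $(k,k+1)$ returns exactly $f_n$ and no jump terms arise), integration by parts gives
\[
I_n=\bigl[y F_n(y)\bigr]_0^{n/2}-\int_0^{n/2} F_n(y)\,\dd y.
\]
The boundary term at $y=0$ vanishes, while at $y=n/2$ I would invoke the symmetry of the Irwin--Hall law about its mean $n/2$ (each summand is symmetric about $\half$), giving $F_n(n/2)=\half$ and hence a boundary contribution of $(n/2)\cdot\half=\quart n$. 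This already accounts for the leading term in the stated formula, so the entire task reduces to showing that $\int_0^{n/2} F_n(y)\,\dd y$ equals the displayed sum.

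The chief obstacle is the moving upper limit $[y]$ in the inner sum defining $F_n$, which makes a naive interval-by-interval integration over each $[k,k+1]$ cumbersome. I would sidestep this by rewriting $F_n$ with truncated powers: setting $(y-j)_+^n=(y-j)^n$ for $y\ge j$ and $0$ otherwise, one has $F_n(y)=(n!)^{-1}\sum_{j=0}^n (-1)^j {n\choose j}(y-j)_+^n$ uniformly in $y$, since the terms with $j>[y]$ vanish automatically and ${n\choose j}=0$ for $j>n$. This converts the $y$-dependent summation range into a fixed one, after which sum and integral may be exchanged. Verifying that this reformulation leaves $F_n$ unchanged is the one piece of bookkeeping that genuinely needs care.

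With this in hand the integral factorises term by term: for each $j$,
\[
\int_0^{n/2}(y-j)_+^n\,\dd y=\int_j^{n/2}(y-j)^n\,\dd y={(n/2-j)^{n+1}\over n+1}\quad (j\le n/2),
\]
while the integral is zero for $j>n/2$, so the sum truncates at $j=n/2$ (and its $j=n/2$ term is itself zero, reconciling with the stated range). Collecting factors via $(n/2-j)^{n+1}=(n/2)^{n+1}(1-2j/n)^{n+1}=n^{n+1}(\half)^{n+1}(1-2j/n)^{n+1}$ together with $1/\{n!(n+1)\}=1/(n+1)!$ reassembles precisely the claimed expression for $\int_0^{n/2}F_n(y)\,\dd y$, and subtracting it from $\quart n$ completes the proof.
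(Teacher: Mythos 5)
Your proposal is correct, and while it shares the paper's opening move, the heart of the computation is carried out by a genuinely different device. Like the paper, you reduce via integration by parts together with the symmetry $F_n(n/2)=\half$ to the identity $I_n=\quart n-s_n$ with $s_n=\int_0^{n/2}F_n(y)\,\dd y$ (the paper phrases the parts step through $\int_0^a(1-F_n)\,\dd y=a\{1-F_n(a)\}+\int_0^a yf_n(y)\,\dd y$, which is the same thing). Where you diverge is in evaluating $s_n$: the paper splits the \emph{domain} into unit intervals $[k,k+1]$, integrates the polynomial piece on each, interchanges the resulting double sum $\sum_{k}\sum_{j\le k}$, and telescopes the inner sum $\sum_{k=j}^{n/2-1}\{(k+1-j)^{n+1}-(k-j)^{n+1}\}$ down to $(n/2-j)^{n+1}$; you instead decompose the \emph{integrand}, rewriting $F_n(y)=(n!)^{-1}\sum_{j=0}^n(-1)^j{n\choose j}(y-j)_+^n$ with truncated powers so that the summation range no longer depends on $y$, after which a single sum--integral exchange (trivially justified for a finite sum) and the elementary integral $\int_j^{n/2}(y-j)^n\,\dd y=(n/2-j)^{n+1}/(n+1)$ yield $s_n=\{1/(n+1)!\}\sum_{j=0}^{n/2}(-1)^j{n\choose j}(n/2-j)^{n+1}$ directly. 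Both routes land on the same expression, but yours buys a shorter calculation: the truncated-power trick absorbs all the bookkeeping the paper handles by interval-splitting, summation interchange, and telescoping, at the modest cost of the one verification you rightly flag, namely that the $(y-j)_+^n$ rewriting reproduces $F_n$ exactly (terms with $j>[y]$ vanish since then $y-j<0$). Your treatment of the edge cases is also sound: the terms with $j>n/2$ integrate to zero, and the $j=n/2$ term is itself zero, reconciling with the stated summation range.
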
 

\begin{proof}
We note first, via partial integration, that
\beqn
\int_0^a(1-F_n)\,\dd y = a\{1-F_n(a)\} + \int_0^a yf_n(y)\,\dd y
\quad {\rm for\ any\ }a.
\eeqn
With $s_n=\int_0^{n/2} F_n\,\dd y$, therefore, we have
$I_n=\int_0^{n/2}(1-F_n)\,\dd y-\quart n=\quart n-s_n$. 
For the latter we have
\beqn
s_n
&=&\int_0^{n/2} {1\over n!}\sum_{j=0}^{ [y] }
(-1)^j{n\choose j}(y-j)^n\,\dd y \\
&=&{1\over n!}\sum_{k=0}^{n/2-1} \int_k^{k+1} \sum_{j=0}^{ [y] }
(-1)^j{n\choose j}(y-j)^n\,\dd y \\
&=&{1\over n!}\sum_{k=0}^{n/2-1} \sum_{j=0}^{k}
(-1)^j{n\choose j}\int_k^{k+1}(y-j)^n\,\dd y \\
&=&{1\over n!}\sum_{k=0}^{n/2-1} \sum_{j=0}^{k}
(-1)^j{n\choose j}{1\over n+1}\{(k+1-j)^{n+1}-(k-j)^{n+1}\} \\
&=&{1\over (n+1)!}\sum_{j=0}^{n/2-1} (-1)^j{n\choose j}
   \sum_{k=j}^{n/2-1} \{(k+1-j)^{n+1}-(k-j)^{n+1}\}. 
\eeqn
This may also be expressed as
\beqn
s_n&=&{1\over (n+1)!} 
\Bigl[ \{1^{n+1}-0^{n+1}+\cdots+(n/2)^{n+1}-(n/2-1)^{n+1}\} \\
& &\quad -{n\choose 1} \{1^{n+1}-0^{n+1}+\cdots+(n/2-1)^{n+1}-(n/2-2)^{n+1}\} \\
& &\quad +{n\choose 2} \{1^{n+1}-0^{n+1}+\cdots+(n/2-2)^{n+1}-(n/2-3)^{n+1}\} 
  +\cdots\Bigr] \\
&=&{1\over (n+1)!} [(n/2)^{n+1} - {n\choose 1}(n/2-1)^{n+1}
  +{n\choose 2}(n/2-2)^{n+1} - \cdots] \\
&=&{1\over (n+1)!} \sum_{j=0}^{n/2} (-1)^j{n\choose j}(n/2-j)^{n+1}, 
\eeqn 
proving the lemma.
\end{proof}


Via these unchartered pathways in the terrain of truncated
moments for Irwin--Hall we have reached explicit 
expressions for $s_n$ and $I_n=\quart n-s_n$.
The statements (\ref{eq:weknowthis}) are seen to be equivalent to 
\beqn
{s_n\over \rootn}
=\half {n^{n+1/2} \over (n+1)!}
\sum_{j=0}^{n/2} (-1)^j{n\choose j}(\half)^n(1-2j/n)^{n+1}
\arr \sigma/\ubiq. 
\eeqn 
From Stirling we have
\beqn
{n^{n+1/2}\over (n+1)!}
\doteq { n^{n+1/2}\over (n+1)^{n+3/2}\exp(-(n+1)) \ubiq }
={\exp(n)\over n+1} \Bigl({n\over n+1}\Bigr)^{n+1/2}{e\over \ubiq}, 
\eeqn
in the sense that the ratio between the two sides converges to 1.
This invites examining the product form
$s_n/\rootn=a_nb_n$, with 
\beqn
a_n={n+1\over \exp(n)}{n^{n+1/2}\over (n+1)!},
\quad 
b_n=\half{\exp(n)\over n+1}
   \sum_{j=0}^{n/2}(-1)^j{n\choose j}(\half)^n(1-2j/n)^{n+1}. 
\eeqn
Here $a_n\arr 1/\ubiq$, and by mathematical necessity 
the complicated looking $b_n$ needs to tend to $\sigma$.
So, in a rather roundabout fashion, via the CLT
and moment convergence, we have managed to prove not
merely the Stirling formula, once again, but also the
impressive looking
\beqn
\lim_{n\arr\infty} {1\over n}\exp(n)
\sum_{j=0}^{n/2}(-1)^j{n\choose j}(\half)^n(1-2j/n)^{n+1}
   =1/\sqrt{3}. 
\eeqn 


\section{Stirling and Wallis from the median of a uniform sample}
\label{section:viamedians} 

Here we shall prove the Stirling approximation formula
(\ref{eq:stirling}) once more, and as a by-product
also the famous Wallis 1656 product formula for $\pi$, 
starting again with a simple i.i.d.~uniform sample
$U_1,\ldots,U_n$.

\subsection{Stirling, again.}
\label{sec:stirlingagain}

In Section \ref{section:uniforms} we worked with the sum
of the uniforms, but now our attention is on their median,
say $M_n$. With $n=2m+1$ odd, for simplicity,
an easy and well-known argument gives its density as
\beqn
g_n(x)={(2m+1)!\over m!\,m!}x^m(1-x)^m \quad {\rm for\ }x\in[0,1], 
\eeqn 
which is a Beta $(m+1,m+1)$. Via a well-known representation
for the Beta, in terms of a ratio of Gammas, we may write
$M_n=U_{m+1}/(U_{m+1}+V_{m+1})$,
where $U_{m+1}=A_1+\cdots+A_{m+1}$ and $V_{m+1}=B_1+\cdots+B_{m+1}$,
say, involving i.i.d.~unit exponentials. It then follows
straightforwardly and instructively that
\beqn
(m+1)\{U_{m+1}/(m+1)-1\}\arr_d Z_1,\quad 
(m+1)\{V_{m+1}/(m+1)-1\}\arr_d Z_2, 
\eeqn
with these limits being independent standard normals,
and that $Z_n=2\rootn(M_n-\half)\arr_d Z=(Z_1-Z_2)/\sqrt{2}$,
a standard normal,
via an application of the delta method. The two limiting 
standard normals $Z_1$ and $Z_2$ featured here
are incidentally just as in Section \ref{subsection:gammas},
but here they play a different role,
just to establish that $Z_n\arr_d Z$. 

We learn more by working with the density of $Z_n$, which can be written 
\beqn
{1\over 2\rootn} g_n(\half+{z\over 2\rootn}) 
&=&{1\over 2\rootn}{(2m+1)!\over m!\,m!}
\{(\half+{z\over 2\rootn})(\half-{z\over 2\rootn})\}^m \\
&=& \half \rootn {(2m)!\over m!\,m!}
   \Bigl(1 - {z^2\over n}\Bigr)^m(\half)^{2m}.  
\eeqn 
Since $(1-z^2/n)^m\arr\exp(-\half z^2)$, and we already know
that $Z_n\arr_d Z$, we must by necessity have
\beq
c_n=
\half(2n+1)^{1/2} {2n\choose n}(\half)^{2n}\arr c=1/\ubiq. 
\label{eq:cntoc}
\eeq
This is actually related to what \citet{Demoivre1730} was
working on (without finding the $\ubiq$ connection),
good approximations to the middle term of
the binomial expansion of $(1+1)^{2n}$.
With $Y_{2n}$ a binomial $(2n,\half)$, we may translate
the above to the middle binomial probability
\beq
\Pr(Y_{2n}=n)=b_{2n}(n,\half)
   ={2n\choose n}(\half)^{2n}
   \doteq {1\over \sqrt{\pi\,n}}, 
\label{eq:middlebinomial}
\eeq
associated also with the recurrent nature of symmetric
random walks; see Remark C in Section \ref{section:concludingremarks}. 

By inserting for $n!$, Stirling{'}s formula implies the
limit in~\eqref{eq:cntoc}, i.e., that $c_n \arr 1/\ubiq$;
but the implication also goes the other way around.
To see this, recall that by the trapezoidal approximation for integrals, 
\beq
\log n! - \half \log n = \int_0^n \log x\,\dd x + O(1/n)
= n \log n - n + 1  + O(1/n).
\label{eq:trapezoidal}
\eeq 
as $n$ tends to infinity. Since $(2n + 1)/2n \to 1$,
clearly $c_n \doteq \half(2n)^{1/2} \{(2n)!/(n!)^2\}(\half)^{2n}$,
which combined with~\eqref{eq:cntoc} entails that 
\beqn
\frac{\sqrt{2\pi n} \{(2n)!/n!\}2^{-2n - 1/2}}{n!} \arr 1. 
\eeqn  
If we take the logarithm of $\{(2n)!/n!\}2^{-2n - 1/2}$
and use the trapezoidal rule, 
\beqn
\log\big( \{(2n)!/n!\}2^{-2n - 1/2} \big)
&=&
\log ( (2n)!)  - \log (n!) - 2n \log 2 - \half \log 2 \\
& =&
\{\log ( (2n)!) - \half \log (2n)\}
    - \{ \log (n!) - \half \log(n) \}- 2n \log 2 \\
& =& \{2n \log(2n) - 2n + 1\} - \{ n \log n - n + 1\} - 2n \log 2 + O(1/n) \\
& =& n \log n - n + O(1/n),
\eeqn 
which shows that $\{(2n)!/n!\}2^{-2n - 1/2} \doteq n^n \exp(-n)$, once again proving the Stirling{'}s formula. This proof is, admittedly, very close to the standard proof of Stirling, the point is, yet again, that we get $\ubiq$ for free from the CLT. 

\subsection{The Wallis product.}

An intriguing formula, from \citet{Wallis1656}, says that
\beq
{\pi\over 2}
={2\over 1}\cdot{2\over 3}\cdot
 {4\over 3}\cdot{4\over 5}\cdot
 {6\over 5}\cdot{6\over 7}\cdot
 {8\over 7}\cdot{8\over 9}\cdots
=\prod_{j=1}^\infty {2j\over 2j-1}{2j\over 2j+1}. 
\label{eq:wallis} 
\eeq
Here we derive this via our Stirling efforts.
For the product up to $n$, write
\beqn
w_n=\prod_{j=1}^n {2j\over 2j-1}{2j\over 2j+1}
={1\over 2n+1} \prod_{j=1}^n { (2j)^4\over \{(2j)(2j-1)\}^2}
={1\over 2n+1} 2^{4n} \Big/ {2n\choose n}^2. 
\eeqn 
We recognise its square root, from (\ref{eq:middlebinomial}), 
and have 
\beqn
1/\sqrt{w_n}=(2n+1)^{1/2} {2n\choose n}(\half)^{2n} \arr (2/\pi)^{1/2},
\eeqn
proving (\ref{eq:wallis}). 
Enthrallingly, we have explained via understanding
the behaviour of the median in uniform samples
that the Wallis formula must be true. 

\section{Stirling via Laplace approximations} 
\label{section:Laplace} 

Suppose $g(x)$ is a smooth function with a clear maximum
$g_{\max}=g(x_0)$ at position $x_0$, and write $c=-g''(x_0)$.
Then the Laplace approximation is 
\beq
\int \exp(g)\,\dd x\doteq
\int\exp\{g(x_0)-\half c(x-x_0)^2\}\,\dd x
=\exp(g_{\max})\, \sqrt{2\pi/c}. 
\label{eq:laplace} 
\eeq
The $\doteq$ is to be read as `approximately equal to' in the sense that the ratio between the left and right hand sides tends to $1$ provided the steepness of the maximum, i.e.~the size of $c$,
increases. Note that the $\ubiq$ factor comes
from the familiar normal integral, not stemming from
probability, per se, but from the natural Taylor expansion
approximation in the exponent of the function being integrated. 

Now try this with the function $g(x)=n\log x-x$,
for $x$ positive. It has $g'(x)=n/x-1$ and $g''(x)=-n/x^2$; 
hence $x_0=n$, $g_{\max}=n\log n-n$, and $c=1/n$. We find
\beqn
\int_0^\infty x^n\exp(-x)\,\dd x=n!\doteq n^n\exp(-n)\sqrt{2\pi\,n}, 
\eeqn 
i.e.~Stirling, once again. This is essentially what
\citet{DiaconisFreedman86} do, but presented somewhat
differently. The argument may be made rigorous by letting
$n$ increase and assess the error made in the
Laplace approximation (\ref{eq:laplace}). 

The Laplace argument is known in model selection statistics
for being an ingredient in the BIC, the Bayesian Information
Criterion, see \citet[Ch.~3]{ClaeskensHjort08}. One version
of the essential computation there is as follows.
Consider data from a one-dimensional parametric model $f(x,\theta)$,
of sample size $n$, leading to the likelihood function
$L_n(\theta)$, say, with ensuing log-likelihood function $\ell_n(\theta)$.
Letting $\hatt\theta$ be the maximum likelihood estimator,
with $L_{n,\max}=L_n(\hatt\theta)$ and observed Fisher information
$J_n=-\ell_n''(\hatt\theta)$, we may integrate over
the parameter region to find 
\beq
\int L_n(\theta)\,\dd\theta
\doteq L_{n,\max} (2\pi/J_n)^{1/2}. 
\label{eq:bic} 
\eeq 
The left hand side is recognised as being the
marginal distribution for the data, in the Bayesian setup
with a flat prior for the parameter. 

It is now instructive to see how this approximation pans
out for a few models. Consider first the case of
a single $X\sim\pois(\theta)$, with likelihood
$\exp(-\theta)\theta^x / x!$ maximised for $\hatt\theta=x$.
The integral in (\ref{eq:bic}) is simply 1,
and the $L_{n,\max}$ is $\exp(-x)x^x/x!$.
The BIC approximation by almost magic delivers Stirling: 
\beqn
1\doteq \{\exp(-x)x^x/x!\}\,(2\pi x)^{1/2}.
\eeqn 
Examination of the error involved in the basic approximation
(\ref{eq:laplace}) shows how this simple Poisson argument
may be made rigorous, by letting $x$ increase.
We may also work through the case of $X_1,\ldots,X_n$
i.i.d.~from the Poisson. Writing $Z=\sumin X_i$, the
likelihood is proportional to $\exp(-n\theta)\theta^z$.
The maximum likelihood estimator is $\hatt\theta=z/n$,
the sample average. The BIC approximation (\ref{eq:bic})
is seen to become
\beqn
    {z!\over n^{z+1}}\doteq \exp(-z) \Bigl({z\over n}\Bigr)^z
       \ubiq {z^{1/2}\over n}, 
\eeqn 
i.e.~Stirling, again, now expressed in terms of $z$. 

Next consider $X_1,\ldots,X_n$ i.i.d.~from the exponential model
$\theta\exp(-\theta x)$ for $x$ positive.
The likelihood is $\theta^n\exp(-z\theta)$, with $z=\sumin x_i$,
leading to the maximum likelihood estimator
$\hatt\theta=n/z$, the inverse of the sample average.
The BIC approximation formula is seen to lead to
Stirling again, lo {\it\&} behold: 
\beqn
{n!\over z^{n+1}}\doteq
      \Bigl({n\over z}\Bigr)^n \exp(-n)\ubiq {\rootn\over z}. 
\eeqn 

We allow ourselves including also the binomial model here.
With $X$ a binomial $(n,p)$, and with maximum likelihood
estimator $\hatt p=x/n$, the BIC approximation yields
\beqn
{x!\,(n-x)!\over (n+1)!}
\doteq \Bigl({x\over n}\Bigr)^x \Bigl({n-x\over n}\Bigr)^{n-x}
   \Bigl\{ {x\over n}{n-x\over n}\Bigr)\Bigr\}^{1/2}
   {\ubiq \over \rootn}. 
\eeqn
Instructively, this is seen to break down
into Stirling for $x!$, Stirling for $(n-x)!$,
and Stirling for $(n+1)!$, in partes tre! 

\section{Notes on the historical background for different themes} 
\label{section:history}


In our introduction section we briefly pointed to \citet{Pearson24},
who ventured that Stirling did not quite deserve to
have his and only his name attached to the famous
$n!$ formula, his point being that \citet{Demoivre1730}
was essentially onto the same formula, but without
the $\ubiq$ constant.
Other scholars have disagreed with Pearson in this particular
regard; see the detailed comments in
\citet{Tweddie22}, 
\citet{Tweddle84} (the latter writing from the University of Stirling),
\citet[Section 3]{LeCam86},
\citet{Dutka91}, \citet{Gelinas17}.
``Stirling greatly surprised de Moivre by the introduction
of $\pi$ into the calculation of the ratio
of the coefficient of the middle term in $(1+x)^n$ to the
sum of all coefficients'',
i.e.~of ${n\choose n/2}(\half)^n$, writes \citet{Cajori23},
reviewing the \citet{Tweddie22} book.
When going through the historical details associated
with the $\log n!$ formula, from (\ref{eq:stirling}),
complete with an error term of the right order $O(1/n)$,
\citet{Gelinas17} argues, 
``Likewise indeed, James Stirling must keep his indisputable
priority for the discovery, proof, and publication of Stirling's
series [the formula for $\log n!$] including of course
the explicit constant $\log\ubiq$.''

In Section \ref{section:uniforms} we found new uses
for the Irwin--Hall distribution, so named due to two
papers published in the same Biometrika 1927 volume,
\citet{Irwin27} and \citet{Hall27}. 
E.S.~Pearson put in an informative editorial note inside
the latter article (pp.~243--244): 
``Proceeding by completely different methods, Irwin and Hall
have obained equations for the frequency distribution of means
in samples from a population following a `rectangular'
law of distribution [...]. Their results are of considerable
interest, but for the light they throw upon the distribution
of moments in samples from populations in which the
variable lies within a finite or limited range.'' 
As is clear from careful historical notes in
e.g.~\citet{Sheynin73} and \citet*{Marengo17}, however,
the distribution is in reality far older, going back
to Lagrange and Laplace in the latter 18th and the early 19th
century, associated also with early uses of generating functions. 

In Section~\ref{sec:stirlingagain} we derived Stirling{'}s formula
by combining the CLT for the median of uniforms and the
trapezoidal rule (see Eq.~\ref{eq:trapezoidal}). Intriguingly,
the trapezodial rule dates back at least to a century b.C.,
when Babylonian astronomers used it in tracking Jupiter,
see \citet{Ossendrijver16}.

In Section \ref{section:viamedians} we discussed
the Wallis 1656 product formula, and its relation
to both Stirling and the de Moivre binomial middle
probability approximation. 
Wallis' crucial role in developing infinitesimals,
including boldly inventing the symbols $\infty$ and ${1\over \infty}$,
in an intellectual era where thinking along such lines
was partly considered precarious and even heretic, 
is vividly described in \citet[Chs.~8, 9]{Alexander14} -- how
a {\it dangerous} mathematical theory shaped the modern world,
no less. There were several intellectual heavy-weights
among his antagonists, from Thomas Hobbes to
the politically powerful Jesuite school. 

In our Section \ref{section:Laplace}, with various
Stirling proofs based on Laplace approximations
to integrated likelihoods, we pointed to 
\citet{DiaconisFreedman86}, whose argument essentially
matches the first of our proofs in the section mentioned.
The \citet{DiaconisFreedman86} argument is at the outset
a mathematical one, not related to probability or
statistics per se. It is incidentally the Laplace
approximation itself which pushes the analysis
towards the $\ubiq$ constant, via the classical
Gaussian integral.  They explain however how they
``stumbled on'' their proof as a by-product of developing
theory for finite forms of de Finetti's theorem
for exponential families; see \citet{DiaconisFreedman80}.
In the spirit pointing to crucial achievements
in the distant past we ought also to point to
\citet{Laplace1774}, the origin of the integration
approximation technique carrying his name. 

The treatment of Stirling in \citet{Feller68} deserves
a footnote, since the book is a classic, carefully studied
in previous generations. He arrives at the Stirling approximation
in several steps, first examining the logarithm (page 52)
and then quite a bit later using the normal approximation
to the binomial (page 180) to infer that the constant involved
must be $\ubiq$; for some of the technicalities
he also credits \citet{Robbins55}. Interestingly,
he tosses out Exercise 22, page 66, concerning Laplace approximation
of the gamma function -- seemingly not noticing
that this is in fact the Stirling formula!, and, incidentally,
close to the main argument used in \citet{DiaconisFreedman86}. 

\section{Concluding remarks} 
\label{section:concludingremarks}

\def\cf{{\rm ch}}
\def\cf{\varphi}

In our article we have demonstrated that
the classic \citet{Stirling1730} approximation formula (\ref{eq:stirling}), 
the middle binomial probability formula (\ref{eq:middlebinomial})
associated with \citet{Demoivre1730} (though he did not know
the right constant), and the \citet{Wallis1656}
product formula (\ref{eq:wallis}), are essentially equivalent;
starting with any one of these historically important achievements,
one may deduce the others. 
We round off our article by offering a few concluding remarks.


\smallskip
{\it Remark A.}
Various classical and not too hard calculations
for teaching statistics concern the mean squared error
when estimating the binomial $p$, based on $X\sim\binom(n,p)$.
It is of course $\E\,(X/n-p)^2=p(1-p)/n$.
But in a Teacher's Corner contribution, \citet{Blyth80}
asks about the ``somewhat more natural measure''
$D_n(p)=\E_p\,|X/n-p|$, i.e.~using absolute loss. As we
know this is less easy mathematically, and it takes some
stamina, here Blyth actually uses results from
\citet{Frisch24}, to establish exact formulae for
$d_n$, the maximum risk over $p$. Correcting a little
mistake in his formulae, we find 
\beqn
d_n=\rootn\max_{{\rm all\ }p} D_n(p)
=\rootn {n-1\choose \half n-1}(\half)^n 
\eeqn
for even $n$. Blyth goes on, via the Stirling formula, to show that
$d_n\arr d=\half(2/\pi)^{1/2}=1/\ubiq$, explicitly using
the Stirling formula in the process. But we may also
utilise the basic reasoning of Section \ref{section:CLTcases}
to argue that we {\it know} that $\rootn\,\E\,|X_n/n-p|$
must tend to $(2/\pi)^{1/2}\{p(1-p)\}^{1/2}$,
via the CLT for binomials, and hence deduce from $d_n\arr d$
that, once more, Stirling holds. 

\smallskip
{\it Remark B.}
Consider a symmetric random walk, with steps to the right and the left
with equal probabilities, starting at zero. The position at
time $n$ is $Y_n=R_n-L_n=2R_n-n$, with $R_n$ a binomial $(n,\half)$.
The chance of revisiting zero after $2n$ steps is
\beqn
p_n=\Pr(Y_{2n}=0)={2n\choose n}(\half)^{2n}, 
\eeqn 
which we have met and worked with in Section \ref{sec:stirlingagain}.
de Moivre could prove the approximation $c/\rootn$, but
without finding the right $c$ constant. The point now is to
derive the Stirling related approximtaion $1/(\pi\,\rootn)$ again,
in yet another way. Let $Z_{2n}=2\,(2n)^{1/2}\{R_{2n}/(2n)-\half\}$,
which tends to the standard normal. We have
\beqn
p_n=\Pr(|Y_{2n}|<\half)=\Pr(|Z_{2n}|<1/(2n)^{1/2})
   \doteq 2\phi(0)\,1/(2n)^{1/2}=1/(\pi\,\rootn), 
\eeqn 
once more with `$\doteq$' in the precise sense that the
ratio between the two sides tends to 1. Rounding off this
remark, we mention that this particular approximation,
in the random walk context, can be used to approximate
the number of visits $\E\,N_{2n}$ to zero, in the course
of time steps $1,\ldots,2n$, as
$(1/\sqrt{\pi})\sumin 1/\sqrt{i}\doteq (1/\sqrt{\pi})\half\rootn$.
The point here is also that this number indeed tends to
infinity, related of course to the recurrency property
of the symmetric random walk; each state will be revisited
infinitely many times, with probability~1. 

\smallskip
{\it Remark C.} 
As in Section \ref{section:CLTcases}, consider 
$Y_n$ a Poisson with parameter $n$. Via results worked out
in \citet{Crow58}, one may deduce a perhaps surprisingly
simple expression for the mean absolute deviation,
$\E\,|Y_n-n|=2\exp(-n)n^{n+1}/n!$. Via the CLT and uniform
integrability, we must have $\E\,|Y_n-n|/\rootn\arr (2/\pi)^{1/2}$,
the absolute mean of a standard normal. 
We have rediscovered the Stirling, for the $(n+1)$st time
in this article.







\bibliographystyle{biometrika}
\bibliography{diverse_bibliography2024}

\end{document}